\begin{document}
\nolinenumbers

\jname{Biometrika}
\jyear{2013}
\jvol{100(2)}
\jnum{519-524 }
\doi{10.1093/biomet/ass084}

\markboth{T. Yan, \and J. Xu }{A central limit theorem in the
$\beta$-model}

\title{A central limit theorem in the $\beta$-model for undirected random graphs
with  a diverging number of vertices}

\author{Ting Yan }
\affil{ Department of Statistics and Finance, \\
 University of Science and Technology of China, Hefei, Anhui 230026, P. R. China\\
\email{sunroom@mail.ustc.edu.cn}}

\author{\and Jinfeng Xu}
\affil{  Department of Statistics and Applied Probability, \\
National University of Singapore, 6 Science Drive 2, Singapore 117546, Singapore \\
\email{staxj@nus.edu.sg}}

\maketitle

\begin{abstract}
\citet{Chatterjee:Diaconis:Sly:2011} established the consistency of
the maximum likelihood estimator in the $\beta$-model for undirected
random graphs when the number of vertices goes to infinity. By
approximating the inverse of the Fisher information matrix, we
obtain its asymptotic normality under mild conditions. Simulation
studies and a data example illustrate the theoretical results.
\end{abstract}

\begin{keywords}
$\beta$-model; Central limit theorem; Fisher information matrix.
\end{keywords}

\section{Introduction}
For an undirected random graph on $t$ vertices, the $\beta$-model
(\citeauthor{Chatterjee:Diaconis:Sly:2011}, 2011) assumes that there
exists an edge between vertices $i$ and $j$ with probability
\begin{equation*}\label{pij}
p_{i,j}=\frac{ e^{\beta_i+\beta_j} }{1+ e^{\beta_i+\beta_j} },~~  1\le i
\not= j\le t,
\end{equation*}
independently of all other edges, where $\beta_i$ is the influence
parameter of vertex $i$. First introduced by
\citet{Holland:Leinhardt:1981}  for directed networks, this model is
closely related to the Bradley--Terry model for rankings
(\citeauthor{Bradley:Terry:1952}, \citeyear{Bradley:Terry:1952}).
For undirected random graphs, it has been considered by
\citet{Newman:Strogatz:Watts:2001}, \citet{Jackson:2008}, and
\citet{Blitzstein:Diaconis:2011}. For many real world networks, the
number of vertices $t$ is  large and hence it is necessary to
consider asymptotics with $t\to\infty$. In the Bradley--Terry model
(\citeauthor{Bradley:Terry:1952}, \citeyear{Bradley:Terry:1952}) for
paired comparisons, \citet{Simons:Yao:1999} proved that the maximum
likelihood estimator is consistent and asymptotically normal when
the number of parameters goes to infinity. This contrasts with the
well-known Neyman--Scott problem under which the maximum likelihood
estimator fails even to attain consistency when the number of
parameters goes to infinity. More recently,
\citet{Chatterjee:Diaconis:Sly:2011} proved that the maximum
likelihood estimator of the $\beta$-model is consistent when $t$
goes to infinity.  In this note,  by approximating the inverse of
the Fisher information matrix, we further establish its asymptotic
normality under mild conditions.

\section{Main results}
Suppose that $\mathcal{G}$ is an undirected graph on $t$ vertices
generated from the $\beta$-model where $\beta=(\beta_1, \ldots,
\beta_t)^T\in R^t$ is unknown. Let $d_1, \ldots, d_t$ be the degrees
of the vertices of $\mathcal{G}$. The likelihood is
\begin{equation*}\label{likelihoodfunction}
\frac{ e^{\sum_i \beta_i d_i} }{ \prod_{i<j}
(1+e^{\beta_i+\beta_j})}.
\end{equation*}
The maximum likelihood estimator $\hat{\beta}$ of $\beta$ can be
obtained by solving the equations
\begin{equation}\label{moment}
d_i=\sum_{j\neq i} \frac{e^{\hat{\beta}_i+\hat{\beta}_j}}{ 1+
e^{\hat{\beta}_i+\hat{\beta}_j} },~~ i=1,\ldots,t.
\end{equation}
In a preprint available at \url{http://arxiv.org/abs/1105.6145},
Rinaldo, Petrovic, and Fienberg obtained necessary and sufficient
conditions for the existence and uniqueness of $\hat{\beta}$.
\citet{Chatterjee:Diaconis:Sly:2011} established the following
theorem:
\begin{theorem}\label{dense-con}
Define $L_t=\max_{1\le i\le t}|\beta_i|$.\\
(a) If $L_t=o(\log t)$, then with probability tending to one as
$t\to\infty$,
 there exists a unique solution $\hat{\beta}$ of the maximum
likelihood equations {\rm (1)}.\\
(b) If $L_t=o\{\log (\log t)\}$, then
\begin{equation*}
\max_{1\le i\le t} |\hat{\beta}_i-\beta_i|\le O_p\{ (\log
t)^{1/2}t^{-1/2}e^{c_1e^{c_2L_t}+c_3L_t}  \}=o_p(1),
\end{equation*}
where $c_1, c_2$ and $c_3$ are positive constants. Hence
$\hat{\beta}$ is uniformly consistent.
\end{theorem}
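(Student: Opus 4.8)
The plan is to study the likelihood equations (1) through the smooth map $g:R^t\to R^t$ with components $g_i(\gamma)=\sum_{j\neq i}f(\gamma_i+\gamma_j)$, where $f(x)=e^x/(1+e^x)$, so that (1) reads $g(\hat\beta)=d$. Two structural facts drive everything. First, $g$ is a gradient field, since $\partial g_i/\partial\gamma_k=f'(\gamma_i+\gamma_k)=\partial g_k/\partial\gamma_i$; indeed $g=\nabla\Psi$ with $\Psi(\gamma)=\sum_{i<j}\log(1+e^{\gamma_i+\gamma_j})$, and the negative log-likelihood equals $\Psi(\gamma)-\sum_i\gamma_i d_i$. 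Its Hessian is the Fisher information matrix $V(\gamma)$ with $v_{ik}=f'(\gamma_i+\gamma_k)$ off the diagonal and $v_{ii}=\sum_{k\neq i}f'(\gamma_i+\gamma_k)$; the identity $\gamma^{\T}V\gamma=\sum_{i<j}f'(\gamma_i+\gamma_j)(\gamma_i+\gamma_j)^2$ shows $V$ is positive definite for $t\ge 3$, so the log-likelihood is strictly concave and (1) has at most one solution. Second, the degree vector concentrates: each $d_i$ is a sum of $t-1$ independent Bernoulli variables with mean $E(d_i)=g_i(\beta)$, so Hoeffding's inequality and a union bound give $\|d-E(d)\|_\infty=O_p\{(t\log t)^{1/2}\}$.

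For part (a), strict concavity already yields uniqueness, so only existence of a finite solution near $\beta$ must be shown, and here I would invoke a quantitative inverse-function (Newton--Kantorovich) argument. Since $L_t=o(\log t)$ and $|\beta_i+\beta_j|\le 2L_t$, the derivative $f'(\beta_i+\beta_j)=e^{\beta_i+\beta_j}/(1+e^{\beta_i+\beta_j})^2$ lies between a constant multiple of $e^{-2L_t}$ and $1/4$, so $v_{ii}$ is at least of order $t\,e^{-2L_t}$ on a sup-norm ball $\mathcal{B}=\{\gamma:\|\gamma-\beta\|_\infty\le r\}$ of fixed radius $r$; this bounds the relevant norm of $V(\gamma)^{-1}$ by a quantity of order $e^{2L_t}/t$ uniformly over $\mathcal{B}$. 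Feeding the initial residual $g(\beta)-d=E(d)-d$, of norm $O_p\{(t\log t)^{1/2}\}$, together with a Lipschitz bound for $V$ on $\mathcal{B}$ into the Newton--Kantorovich criterion shows that the Newton iterates stay in $\mathcal{B}$ and converge to a solution $\hat\beta$ of (1) with probability tending to one, provided the product of the condition number, the Lipschitz constant and the residual stays below a fixed threshold; this is exactly the regime carved out by $L_t=o(\log t)$.

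For part (b) I would linearise the solved equation. Writing $g(\hat\beta)-g(\beta)=d-E(d)$ and expanding to first order about $\beta$ gives $\hat\beta-\beta=V(\beta)^{-1}\{d-E(d)\}+R$, where $R$ collects the second-order remainder. The leading term is controlled by approximating the inverse Fisher information matrix: because the off-diagonal entries $v_{ik}$ are of order $e^{-2L_t}$, individually negligible against $v_{ii}$ of order $t\,e^{-2L_t}$, the inverse $V(\beta)^{-1}$ is to leading order $\mathrm{diag}(1/v_{11},\dots,1/v_{tt})$ up to a lower-order correction, and together with $\|d-E(d)\|_\infty=O_p\{(t\log t)^{1/2}\}$ this gives $\|V(\beta)^{-1}\{d-E(d)\}\|_\infty=O_p\{e^{2L_t}(\log t)^{1/2}t^{-1/2}\}$. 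The remainder $R$ must then be bounded uniformly over $\mathcal{B}$, and it is the crude propagation of the factors $e^{O(L_t)}$ through the quadratic term and the Newton control that inflates the rate to the stated $O_p\{(\log t)^{1/2}t^{-1/2}e^{c_1e^{c_2L_t}+c_3L_t}\}$. Requiring this bound to be $o_p(1)$ forces $e^{c_2L_t}=o(\log t)$, that is $L_t=o\{\log(\log t)\}$, which is precisely the hypothesis of part (b).

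The step I expect to be the main obstacle is the control of $V^{-1}$, because $V$ is only weakly diagonally dominant: its off-diagonal entries are individually small, but their row sums equal the diagonal, so the naive Neumann expansion about $\mathrm{diag}(v_{ii})$ does not converge in operator norm, and a dedicated approximation of the inverse Fisher information with explicit error bounds uniform over $\mathcal{B}$ is required. Securing that approximation, while simultaneously tracking how the $e^{O(L_t)}$ derivative bounds accumulate through the nonlinear remainder so as to recover the double-exponential rate rather than something worse, is the delicate part of the argument.
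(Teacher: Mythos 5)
First, a point of reference: the paper does not prove this theorem. It is quoted verbatim from \citet{Chatterjee:Diaconis:Sly:2011}, so there is no in-paper proof to compare against; the relevant comparison is with their argument, which establishes existence and consistency via an explicit fixed-point iteration for the likelihood equations (it is that iteration, whose contraction constants degrade with $L_t$, that produces the doubly exponential factor $e^{c_1e^{c_2L_t}}$), not via a Newton--Kantorovich scheme. Your overall architecture --- strict concavity for uniqueness (the identity $\gamma^{\T}V\gamma=\sum_{i<j}v_{i,j}(\gamma_i+\gamma_j)^2$ is correct and does give positive definiteness for $t\ge 3$), Hoeffding plus a union bound for $\|d-E(d)\|_\infty=O_p\{(t\log t)^{1/2}\}$, and a quantitative inverse-function argument for existence and the rate --- is a legitimate alternative route in principle.

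However, there is a genuine gap, and it sits exactly where you say the difficulty lies: the uniform control of $V(\gamma)^{-1}$ in the $\ell_\infty\to\ell_\infty$ operator norm over the ball $\mathcal{B}$. You assert this norm is of order $e^{2L_t}/t$ ``since $v_{ii}$ is at least of order $te^{-2L_t}$,'' but a lower bound on the diagonal of $V$ does not bound $\|V^{-1}\|_{\infty\to\infty}$; as you note yourself, $V$ is only weakly diagonally dominant (the off-diagonal row sums equal the diagonal), so no Neumann-series or Gershgorin argument applies, and the spectral bound $\lambda_{\min}(V)\gtrsim (t-2)e^{-2L_t}$ loses a factor $t^{1/2}$ when converted to the sup norm. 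The entrywise approximation $V^{-1}\approx S_t$ with error $O\{e^{6L_t}(t-1)^{-2}\}$ --- Proposition 1 of this very paper --- is precisely the missing ingredient, and it yields $\|V^{-1}\|_{\infty\to\infty}=O(e^{6L_t}/t)$, not $e^{2L_t}/t$; moreover for your scheme you would need such a bound uniformly over $\mathcal{B}$, not just at $\beta$. Without that lemma both the Newton--Kantorovich verification in (a) and the remainder bound in (b) are unproved assertions. A secondary inconsistency: compounding factors of $e^{O(L_t)}$ through a quadratic remainder can only produce a bound of the form $e^{CL_t}(\log t)^{1/2}t^{-1/2}$, never the stated $e^{c_1e^{c_2L_t}}$ factor, so your closing claim that requiring the bound to be $o_p(1)$ ``forces'' $L_t=o\{\log(\log t)\}$ is reverse-engineered from the statement rather than derived from your own estimates; if your route were completed it would in fact give consistency under the weaker condition $L_t=o(\log t)$, and the $\log\log t$ condition in (b) is an artefact of the particular iteration used by \citet{Chatterjee:Diaconis:Sly:2011}.
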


Denote the covariance matrix of $d=(d_1, \ldots, d_t)$ by
$V_t=(v_{i,j})_{t\times t}$, where
\begin{equation*}\label{V}
v_{i,j}=\frac{ e^{\beta_i+\beta_j} }{(1+e^{\beta_i+\beta_j})^2},~~
v_{i,i}=\sum_{j\neq i} v_{i,j} ~~(i,j=1,\ldots, t;i\neq j).
\end{equation*}
This is also the Fisher information matrix for $\beta$. To establish
the asymptotic normality of $\hat{\beta}$, we need an accurate
approximation to $V_t^{-1}$. Let $S_t=(s_{i,j})_{t\times t}$, where
$s_{i,j}=\delta_{i,j}/v_{i,i}-1/v_{\cdot\cdot}$, $\delta_{i,j}$ is
the Kronecker delta function and $v_{\cdot\cdot}=\sum_{i,j=1;i\neq
j}^tv_{i,j}$. In Proposition 1 which is given in Appendix 1, we
obtain an upper bound on the error of using $S_t$ to approximate
$V^{-1}_t$.  In the following, we present a central limit theorem
for the maximum likelihood estimator in the $\beta$ model. The proof
is given in Appendix 2.

\begin{theorem}\label{central-dense}
If $L_t=o\{\log (\log t)\}$, then for any fixed $r\ge 1$, as
$t\to\infty$, the vector consisting of the first $r$ elements of
$G_t^{1/2}(\hat{\beta}-\beta)$ is asymptotically standard
multivariate normal, where $G_t=\mbox{diag}(v_{1,1}, \ldots,
v_{t,t})$ and $G_t^{1/2}=\mbox{diag}(v_{1,1}^{1/2}, \ldots,
v_{t,t}^{1/2})$.
\end{theorem}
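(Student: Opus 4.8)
The plan is to linearise the moment equations \textup{(1)} about the true parameter and read off the behaviour of $\hat\beta-\beta$ from the fluctuations of the degree sequence. Write $F_i(\beta)=\sum_{j\neq i}e^{\beta_i+\beta_j}/(1+e^{\beta_i+\beta_j})=E(d_i)$, so that the maximum likelihood estimator solves $F(\hat\beta)=d$, and observe that the Jacobian of $F$ at $\beta$ is exactly the Fisher information $V_t$. A second-order Taylor expansion then gives
\begin{equation*}
d-E(d)=V_t(\hat\beta-\beta)+R,
\end{equation*}
where $R$ is quadratic in $\hat\beta-\beta$, its entries involving the uniformly bounded third-order log-likelihood derivatives $p_{i,j}(1-p_{i,j})(1-2p_{i,j})$. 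Inverting and substituting the explicit approximation $S_t$ for $V_t^{-1}$ supplied by Proposition 1, I obtain the decomposition
\begin{equation*}
\hat\beta-\beta=S_t\{d-E(d)\}+W,\qquad W=(V_t^{-1}-S_t)\{d-E(d)\}-V_t^{-1}R.
\end{equation*}

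Because $s_{i,j}=\delta_{i,j}/v_{i,i}-1/v_{\cdot\cdot}$, the $i$th coordinate of $G_t^{1/2}S_t\{d-E(d)\}$ equals $\{d_i-E(d_i)\}/v_{i,i}^{1/2}-(v_{i,i}^{1/2}/v_{\cdot\cdot})\sum_{j=1}^t\{d_j-E(d_j)\}$. Here $\sum_j\{d_j-E(d_j)\}=2\sum_{k<l}(a_{k,l}-p_{k,l})$ has variance $2v_{\cdot\cdot}$, so the global term has variance $2v_{i,i}/v_{\cdot\cdot}$; since $v_{i,i}\le (t-1)/4$ and $v_{\cdot\cdot}\ge t(t-1)e^{-2L_t}/4$, this is $O(t^{-1}e^{2L_t})=o(1)$ under $L_t=o\{\log(\log t)\}$, hence negligible. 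The leading term $\{d_i-E(d_i)\}/v_{i,i}^{1/2}=v_{i,i}^{-1/2}\sum_{j\neq i}(a_{i,j}-p_{i,j})$ is a standardised sum of independent, uniformly bounded summands whose variance $v_{i,i}\ge (t-1)e^{-2L_t}/4$ diverges, so the Lindeberg--Feller theorem yields asymptotic standard normality. For the joint statement over the first $r$ coordinates I would apply the Cram\'er--Wold device: any fixed linear combination $\sum_{i\le r}c_i\{d_i-E(d_i)\}/v_{i,i}^{1/2}$ rewrites as a sum over edges of independent bounded terms with vanishing maximal coefficient, to which Lindeberg--Feller again applies, and its variance tends to $\sum_{i\le r}c_i^2$ because the cross-covariances satisfy $\mathrm{Cov}(d_i,d_j)=v_{i,j}=O(1)$ while $v_{i,i}^{1/2}v_{j,j}^{1/2}\to\infty$; thus the limiting covariance is the $r\times r$ identity.

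The crux is to show $v_{i,i}^{1/2}W_i=o_p(1)$, and this is where the full strength of the hypothesis $L_t=o\{\log(\log t)\}$ is consumed. For the matrix-approximation part I would bound $|v_{i,i}^{1/2}[(V_t^{-1}-S_t)\{d-E(d)\}]_i|$ using the entrywise error estimate of Proposition 1 together with the probabilistic control of $\|d-E(d)\|$. For the nonlinear remainder I would combine the bound $|R_j|=O(t\,\|\hat\beta-\beta\|_\infty^2)$, valid since the relevant second derivatives of $F$ are bounded, with the uniform consistency rate $\|\hat\beta-\beta\|_\infty=O_p\{(\log t)^{1/2}t^{-1/2}e^{c_1e^{c_2L_t}+c_3L_t}\}$ from Theorem 1 and with $v_{i,i}^{-1/2}=O(t^{-1/2}e^{L_t})$. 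Writing $V_t^{-1}R\approx S_tR$, the diagonal contribution gives $v_{i,i}^{1/2}[V_t^{-1}R]_i=O_p(t^{1/2}e^{L_t}\,\|\hat\beta-\beta\|_\infty^2)$, which after substitution is of order $t^{-1/2}\log t\,\exp\{(1+2c_3)L_t+2c_1e^{c_2L_t}\}$. The decisive point is that $L_t=o\{\log(\log t)\}$ forces $e^{c_2L_t}=(\log t)^{o(1)}$, so $e^{2c_1e^{c_2L_t}}=t^{o(1)}$ is subpolynomial and the whole expression tends to zero; the same calculation disposes of the approximation term. Assembling the pieces, each fixed coordinate of $G_t^{1/2}(\hat\beta-\beta)$ agrees with $\{d_i-E(d_i)\}/v_{i,i}^{1/2}$ up to $o_p(1)$, and the joint central limit theorem established for the latter completes the proof.
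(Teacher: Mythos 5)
Your overall route is the same as the paper's: Taylor-expand the likelihood equations to get $d-E(d)=V_t(\hat\beta-\beta)+R$, replace $V_t^{-1}$ by the explicit approximation $S_t$ from Proposition 1, prove a central limit theorem for $[S_t\{d-E(d)\}]_i$ (your Lindeberg--Feller/Cram\'er--Wold argument, including the negligibility of the global term $v_{\cdot\cdot}^{-1}\sum_j\{d_j-E(d_j)\}$, is exactly the content of the paper's Proposition 2), and show the quadratic remainder is $o_p(t^{-1/2})$ coordinatewise (the paper's Lemma 2; your estimates for that piece are sound, and your observation that $L_t=o\{\log(\log t)\}$ makes $e^{2c_1e^{c_2L_t}}=t^{o(1)}$ is the right reason the rate closes).

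There is, however, one genuine gap: the linear error term $[F_t\{d-E(d)\}]_i$ with $F_t=V_t^{-1}-S_t$. You propose to control it by ``the entrywise error estimate of Proposition 1 together with the probabilistic control of $\|d-E(d)\|$.'' Any such deterministic matrix-times-vector bound gives at best $|[F_t\{d-E(d)\}]_i|\le t\,\|F_t\|\max_j|d_j-E(d_j)|$, and since $\|F_t\|=O(e^{6L_t}t^{-2})$ while $|d_j-E(d_j)|$ is typically of order $v_{j,j}^{1/2}\asymp t^{1/2}$, this yields only $O_p(e^{6L_t}t^{-1/2})$. After multiplication by $v_{i,i}^{1/2}=O(t^{1/2})$ that is $O_p(e^{6L_t})$, which need not vanish: $L_t=(\log\log t)^{1/2}$ satisfies the hypothesis yet $e^{6L_t}\to\infty$. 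So your argument does not deliver the required $o_p(t^{-1/2})$. What is needed is the cancellation in the sum $\sum_j f_{i,j}\{d_j-E(d_j)\}$, i.e.\ a variance computation rather than a triangle inequality. This is precisely the paper's Lemma 1: $\mathrm{cov}[F_t\{d-E(d)\}]=F_tV_tF_t^T=(V_t^{-1}-S_t)-S_t(I_t-V_tS_t)$, and both terms on the right are entrywise $O(e^{6L_t}t^{-2})$, so Chebyshev gives $[F_t\{d-E(d)\}]_i=O_p(e^{3L_t}t^{-1})=o_p(t^{-1/2})$. (Even the crude bound $\mathrm{var}\le\|F_t\|^2\cdot 2v_{\cdot\cdot}=O(e^{12L_t}t^{-2})$ would suffice, but some second-moment argument is indispensable.) Relatedly, your closing remark that ``the same calculation disposes of the approximation term'' is not correct: that term is linear in $d-E(d)$, not quadratic in $\hat\beta-\beta$, and it is the one place in the proof requiring a separate, genuinely probabilistic estimate.
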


\begin{remark}
By Theorem 2, for any fixed $i$, as $t\rightarrow \infty$, the
convergence rate of $\hat{\beta}_i$ is $1/v_{i,i}^{1/2}$. Since
$(t-1)e^{-2L_t}/4\le v_{i,i}\le (t-1)/4$, the rate of convergence is
between $O(t^{-1/2}e^{L_t})$ and  $O(t^{-1/2})$.
\end{remark}

\section{Numerical examples}
We conduct simulation studies to illustrate our theoretical results.
By Theorem 2, we construct approximate $95\%$ confidence intervals
for $\beta_i$ and $\beta_i-\beta_j$. We report the coverage
probabilities for certain $\beta_i-\beta_j$ and the average coverage
probabilities for $\beta_i ~(i=1, \ldots, t)$ as well as the
probabilities that the maximum likelihood estimator does not exist.
Let $\beta_i=iL_t/t$ and choose $L_t=0, \log (\log t), (\log
t)^{1/2}$ or $\log t$. Using $10,000$ simulations for each scenario,
the results are summarized in Table \ref{simulation1}. We see that
when $L_t=0$ or $\log(\log t)$, the coverage probabilities are very
close to the nominal level, indicating the adequacy of the
confidence intervals. When $L_t=(\log t)^{1/2}$ or $\log t$, the
maximum likelihood estimator does not exist with nonzero probability
and the coverage probabilities deviate much from the nominal level.
Using the normal Q-Q plots, when $L_t=0$ or $\log(\log t)$, the
normality of the estimator is quite evident. However, when
$L_t=(\log t)^{1/2}$, there is a notable deviation from normality.
That demonstrates that the condition on $L_t$ in Theorem 2 is
critical in ensuring the existence of the maximum likelihood
estimator and its asymptotic normality.

\begin{table}[h]\centering
\caption{Estimated coverage probabilities  and probabilities that
the maximum likelihood estimator does not exist (in parentheses),
both multiplied by $100$} \label{simulation1} \scriptsize
\begin{tabular}{ccccccc}
&&&&&&\\
t       &  $(i,j)$ & $L_t=0$ & $L_t=\log (\log t)$ & $L_t=(\log t)^{1/2}$ & $L_t=\log t$ \\
&&&&&&\\
50           &(1,50)      &$ 94.6~ ( 0 )$&$ 95.8~ ( 0.1 )$&$ 89.4 ~( 8)$&$ 0 ~( 100 )$\\
             &(25,26)     &$ 95.0~ ( 0 )$&$ 95.5~ ( 0.1 )$&$ 88.4~ ( 8 )$&$ 0~ ( 100 )$\\
             &(49,50)     &$ 95.2~ ( 0 )$&$ 95.4~ ( 0.1 )$&$ 91.6~ ( 8 )$&$ 0~ ( 100 )$\\
             &ACP       &$ 95.1~ ( 0 )$&$ 95.4~ ( 0.1 )$&$ 88.4~ ( 8 )$&$ 0~ ( 100 )$ \\
&&&&&&\\
100         &(1,100)     &$ 94.3~ ( 0 )$&$ 95.1~ ( 0 )$&$ 97.0~ ( 0.5 )$&$ 0~ ( 100 )$\\
            &(50,51)     &$ 94.6~ ( 0 )$&$ 95.4~ ( 0 )$&$ 95.1~ ( 0.5 )$&$ 0~ ( 100 )$ \\
            &(99,100)    &$ 94.8~ ( 0 )$&$ 95.7~ ( 0 )$&$ 97.7~ ( 0.5 )$&$ 0~ ( 100 )$\\
            &ACP         &$ 95.0~ ( 0 )$&$ 95.2~ ( 0 )$&$ 95.2~ ( 0.5 )$&$ 0~ ( 100 )$\\
&&&&&&\\
200         &(1,200)     &$ 94.9~ ( 0 )$&$ 95.1~ ( 0 )$&$ 96.1~ ( 0 )$&$ 0~ ( 100 )$\\
            &(100,101)   &$ 95.3~ ( 0 )$&$ 95.0~ ( 0 )$&$ 95.1~ ( 0 )$&$ 0~ ( 100 )$ \\
            &(199,200)   &$ 95.1~ ( 0 )$&$ 95.2~ ( 0 )$&$ 96.5~ ( 0 )$&$ 0~ ( 100 )$ \\
            &ACP         &$ 95.1~ ( 0 )$&$ 95.1~ ( 0 )$&$ 95.3~ ( 0 )$&$ 0~ ( 100 )$\\
&&&&&&\\
\end{tabular}
\begin{tabnote}
\mbox{$(i,j)$, coverage probability for $\beta_i-\beta_j$; ACP,
average coverage probability for $\beta_1,\ldots, \beta_t$.}
\end{tabnote}
\end{table}

We analyze the food web dataset in \citet{Blitzstein:Diaconis:2011},
which contains $33$ organisms in Chesapeake Bay, each represented by
a vertex in the graph. As in \citet{Blitzstein:Diaconis:2011}, we
study the simple graph after omitting the self-loop at vertex 19.
The influence parameters and their standard errors are reported in
Table 2. The largest four degrees are 8, 8, 10, 9 for vertices 2, 7,
8, 22, which also have the largest four influence parameters
$-0.083, -0.083, 0.275, 0.102$ from Table \ref{merits}. On the other
hand, the four vertices with the smallest influence parameter
$-2.602$ all have degree $1$.

\begin{table}[h]\centering
\caption{The food web dataset: the estimated influence parameters
$\hat{\beta}$ and their standard errors (in parentheses)}
\label{merits}  \scriptsize
\begin{tabular}{cccccccc }
&&&&&&&\\
Vertex &  $\hat{\beta}$       & Vertex   & $\hat{\beta}$  &  Vertex    &  $\hat{\beta}$    &  Vertex     &  $\hat{\beta}$ \\
&&&&&&&\\
~~1 & $-0.29~ (     2.23 )$   &~~2   & $-0.08~ (     2.33 )$   & ~~3   &  $-0.75~ (     1.98 )$  & ~~4 & $-2.60~ (     0.98 )$     \\
~~5 & $-2.60~ (     0.98 )$   &~~6   & $-1.85~ (     1.35 )$   & ~~7   &  $-0.08~ (     2.33 )$  & ~~8 & ~~~$0.28~  (     2.49 )$    \\
~~9 & $-1.04~ (     1.82 )$   &10  & $-1.85~ (     1.35 )$   & 11  &  $-1.04~  (     1.82)$  &12 & $-0.75~ (     1.98 )$  \\
13& $-1.39~ (     1.61 )$   &14  & $-0.51~ (     2.12 )$   & 15  &  $-0.29~ (     2.23)$  &16 & $-1.39~ (     1.61 )$ \\
17& $-1.85~ (     1.35 )$   &18  & $-0.29~ (     2.23 )$   & 19  &  $-0.51~  (     2.12)$  &20 & $-2.60~ (     0.98 )$ \\
21& $-1.85~ (     1.35 )$   &22  & ~~~$0.10~  (     2.42 )$   & 23  &  $-0.51~  (     2.12)$  &24 & $-2.60~ (     0.98 )$  \\
25& $-1.39~ (     1.61 )$   &26  & $-1.04~ (     1.82 )$   & 27  &  $-0.51~  (     2.12)$  &28 & $-1.39~ (     1.61 )$\\
29& $-1.39~ (     1.61 )$   &30  & $-1.39~ (     1.61 )$   & 31  &  $-1.85~ (     1.35)$  &32 & $-1.04~ (     1.82 )$\\
33& $-1.04~ (     1.82 )$   &    &                        &     &                        &   &                  \\
&&&&&&&\\
\end{tabular}
\begin{tabnote}
\begin{center}
$d=(7, 8, 5, 1, 1, 2, 8, 10, 4, 2, 4, 5, 3, 6, 7, 3, 2, 7, 6, 1, 2,
9, 6, 1, 3, 4, 6, 3, 3, 3, 2, 4, 4)$.
\end{center}
\end{tabnote}
\end{table}

\centerline{\normalsize \textsc{Acknowledgement} } We are grateful
to the editor, the associate editor and a referee for helpful
comments. This research was supported by a grant from National
University of Singapore.
\appendix
\section*{Appendix 1}
\begin{proposition}\label{pro0}
As $t\rightarrow \infty$,
\begin{equation}\label{O-upperbound}
||V^{-1}_t-S_t||\le O\left\{ \frac{e^{6L_t}}{(t-1)^2} \right\},
\end{equation}
where $||A||=\max_{i,j} |a_{i,j}|$ for a matrix $A=(a_{i,j})$.
\end{proposition}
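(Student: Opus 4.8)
My plan rests on the balanced identity $v_{i,i}=\sum_{j\neq i}v_{i,j}$ and on elementary two-sided bounds for the entries. Since $|\beta_i+\beta_j|\le 2L_t$ and $x/(1+x)^2$ is maximized at $x=1$, one gets $\tfrac14 e^{-2L_t}\le v_{i,j}\le\tfrac14$ for $i\neq j$ and $\tfrac14(t-1)e^{-2L_t}\le v_{i,i}\le\tfrac14(t-1)$, hence $1/v_{i,i}=O\{(t-1)^{-1}e^{2L_t}\}$ and $1/v_{\cdot\cdot}=O\{(t-1)^{-2}e^{2L_t}\}$. It is worth noticing at the outset that the claimed bound is, up to constants, $(\min_{i\neq j}v_{i,j})^{-3}(t-1)^{-2}$; this signals that the factor $e^{6L_t}$ should arise as three separate factors of $e^{2L_t}$, which guides the bookkeeping below.

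Writing $G_t$ for the diagonal part of $V_t$ and $B_t=V_t-G_t$, the balance gives $B_t\mathbf{1}=G_t\mathbf{1}$ and $V_t\mathbf{1}=2G_t\mathbf{1}$, where $\mathbf{1}=(1,\ldots,1)^{\T}$. I would start from the exact identity
\[
V_t^{-1}-S_t=-V_t^{-1}E,\qquad E:=V_tS_t-I=B_tG_t^{-1}-\frac{2}{v_{\cdot\cdot}}\,G_t\mathbf{1}\mathbf{1}^{\T},
\]
which is preferable to a Neumann expansion, none of which converges here because $V_t$ is only balanced and not strictly diagonally dominant. The matrix $E$ has entries of size $O\{(t-1)^{-1}e^{2L_t}\}$ and, crucially, satisfies two cancellation identities that carry the argument: $S_tG_t\mathbf{1}=0$ and $\mathbf{1}^{\T}V_tS_t=0$, the latter saying that every column of $E$ sums to $-1$. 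Equivalently, after the symmetric rescaling $V_t=G_t^{1/2}(I+C)G_t^{1/2}$ with $C_{i,j}=v_{i,j}/(v_{i,i}v_{j,j})^{1/2}$, the vector $G_t^{1/2}\mathbf{1}$ is the Perron eigenvector of the entrywise nonnegative matrix $C$ with eigenvalue $1$, while $S_t=G_t^{-1/2}(I-P)G_t^{-1/2}$ with $P$ the projection onto $G_t^{1/2}\mathbf{1}$; the approximation is thus exactly the removal of this rank-one direction.

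To turn this into an $O\{(t-1)^{-2}\}$ bound I need a spectral estimate for $V_t^{-1}$. The quadratic form factorizes as $y^{\T}V_ty=\sum_{i<j}v_{i,j}(y_i+y_j)^2\ge\tfrac14(t-2)e^{-2L_t}\sum_i y_i^2$, so $\lambda_{\min}(V_t)\ge\tfrac14(t-2)e^{-2L_t}$ and $\|V_t^{-1}\|_{\mathrm{op}}=O\{(t-1)^{-1}e^{2L_t}\}$; on the complement of the Perron direction this is the statement that $I+C$ has smallest eigenvalue bounded below by a constant multiple of $e^{-2L_t}$. I would then split each entry $(V_t^{-1}E)_{i,j}=(V_t^{-1})_{i,i}E_{i,j}+\sum_{k\neq i}(V_t^{-1})_{i,k}E_{k,j}$. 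The diagonal term is immediately $O\{(t-1)^{-2}e^{4L_t}\}$ from the two size bounds above, and the leading part of the off-diagonal term is handled by the column-sum cancellation $\sum_kE_{k,j}=-1$ together with $S_t$, which exactly removes the $1/v_{\cdot\cdot}$-sized contributions; what survives is governed by the resolvent of $C$ on the complement of its Perron direction, where the gap $e^{-2L_t}$ supplies the third and last factor of $e^{2L_t}$, reproducing $e^{6L_t}$.

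The hard part is the power of $t$, not the exponential. Pairing a row of $V_t^{-1}$ against a column of $E$ by Cauchy--Schwarz is wasteful: a column of $E$ has Euclidean norm $O\{(t-1)^{-1/2}e^{2L_t}\}$ and the centered row of $V_t^{-1}$ has Euclidean norm $O\{(t-1)^{-1}e^{2L_t}\}$, so this route only yields $O\{(t-1)^{-3/2}e^{4L_t}\}$ and loses a factor $(t-1)^{1/2}$. Recovering the sharp $(t-1)^{-2}$ forces me to control the off-diagonal entries of the resolvent $(I+C)^{-1}$ in the entrywise maximum norm rather than in operator norm, that is, to show that the second-order resolvent remainder decays like $(t-1)^{-1}$ entrywise despite the absence of any spatial locality. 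This is the technical heart of the proposition, and it is exactly where the balanced Perron structure, rather than mere positive definiteness, must be used; the remaining substitutions of the two-sided entry bounds are routine.
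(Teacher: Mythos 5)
There is a genuine gap, and you have in fact located it yourself: your final paragraph concedes that the operator-norm/Cauchy--Schwarz route only yields $O\{(t-1)^{-3/2}e^{4L_t}\}$ and that the entrywise $(t-1)^{-1}$ decay of the resolvent off its Perron direction is ``the technical heart of the proposition'' --- and then you do not prove it. Everything you set up is correct (the identity $V_t^{-1}-S_t=-V_t^{-1}(V_tS_t-I)$, the cancellations $S_tG_t\mathbf{1}=0$ and $\mathbf{1}^{\T}V_tS_t=0$, the quadratic-form bound $\lambda_{\min}(V_t)\ge \tfrac14(t-2)e^{-2L_t}$, and the accounting of the three factors of $e^{2L_t}$), but the one step that actually delivers the exponent $(t-1)^{-2}$ is asserted, not established. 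As written, the proposal is a correct diagnosis of the difficulty rather than a proof.

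The paper closes exactly this gap by an entrywise oscillation argument rather than a spectral one. Writing $F_t=V_t^{-1}-S_t$, $R_t=I_t-V_tS_t$ and $W_t=S_tR_t$, it uses the fixed-point identity $F_t=F_tR_t+W_t$ row by row: with $f_{i,\alpha}=\max_k f_{i,k}$ and $f_{i,\beta}=\min_k f_{i,k}$, the row spread satisfies a contraction
\[
f_{i,\alpha}-f_{i,\beta}\le \frac{tM-(t-2)m}{tM+(t-2)m}\,(f_{i,\alpha}-f_{i,\beta})+\frac{M}{m^2(t-1)^2},
\]
where $m=\min_{i\neq j}v_{i,j}$ and $M=\max_{i\neq j}v_{i,j}$. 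Solving gives $f_{i,\alpha}-f_{i,\beta}\le M\{tM+(t-2)m\}/\{2(t-2)m^3(t-1)^2\}$, and the normalization $2\sum_kf_{i,k}v_{k,k}=1$ pins down the location of the row (giving $f_{i,\beta}\le 1/(2v_{\cdot\cdot})$ and $f_{i,\alpha}\ge 0$), so the spread bound converts into a bound on $\max_k|f_{i,k}|$. Note that $1-\{tM-(t-2)m\}/\{tM+(t-2)m\}\asymp m/M\gtrsim e^{-2L_t}$: this contraction coefficient is precisely your ``spectral gap on the complement of the Perron direction,'' but realized in the entrywise maximum norm, which is what preserves the full factor $(t-1)^{-2}$ coming from $W_t$. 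If you want to complete your own route, you would need an analogous entrywise resolvent estimate for $(I+C)^{-1}$; the contraction on row oscillations is the standard (Simons--Yao-style) way to get it, and without some such device your argument stops at $(t-1)^{-3/2}$.
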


\begin{proof}[of Proposition~\ref{pro0}]
Define $m=\min_{1\le i<j\le t}v_{i,j}$ and $M=\max_{1\le i<j\le
t}v_{i,j}$. It is easy to see that
\begin{equation}\label{vij-ineq}
\frac{e^{2L_t}}{(1+e^{2L_t})^2}\le
v_{i,j}=\frac{e^{\beta_i+\beta_j}}{(1+e^{\beta_i+\beta_j})^2} \le
\frac{1}{4} ~~(i\neq j).
\end{equation}
By \eqref{vij-ineq}, we have $m\ge e^{2L_t}/(1+e^{2L_t})^2$ and
$M\le 1/4$. Denote the $t\times t$ identity matrix by $I_t$. Write
$F_t=(f_{i,j})=V_t^{-1}-S_t$, $R_t=(r_{i,j})=I_t-V_tS_t$ and
$W_t=(w_{i,j})=S_tR_t$. We have the recursion
\[F_t=(V_t^{-1}-S_t)(I_t-V_tS_t)+S_t(I_t-V_tS_t)=F_tR_t+W_t,\]
and it follows that, for any $i$,
\[
f_{i,j}=\sum_{k=1}^t
f_{i,k}\{(\delta_{k,j}-1)\frac{v_{k,j}}{v_{j,j}}+\frac{2v_{k,k}}{v_{\cdot\cdot}}\}
+w_{i,j} ~~~~~(j=1,\ldots, t).
\]
Fixing $i$, let $f_{i,\alpha}=\max_{1\le k\le t}f_{i,k}$ and
$f_{i,\beta}=\min_{1\le k\le t} f_{i,k}$.   Since $2\sum_{k=1}^t
f_{i,k}v_{k,k} = 1$, we have
 $f_{i,\beta}\le 1/(2v_{\cdot\cdot})$ and $f_{i,\alpha}\ge 0$.
By direct calculation, it can be shown that for all $i,j,k$,
\begin{equation}\label{wijin}
\max(|w_{i,j}|, |w_{i,j}-w_{i,k}|)\le \frac{M}{m^2(t-1)^2},
\end{equation}
and
\begin{equation}\label{falphabeta}
f_{i,\alpha}-f_{i,\beta}=\sum_{k=1}^t
(f_{i,k}-f_{i,\beta})
\{(1-\delta_{k,\beta})\frac{v_{k,\beta}}{v_{\beta,\beta}}-(1-\delta_{k,\alpha})\frac{v_{k,\alpha}}{v_{\alpha,\alpha}}\}
+w_{i,\alpha}-w_{i,\beta}.
\end{equation}
 Define $a=M/\{m^2(t-1)^2\}$,
$\Omega=\left\{k:(1-\delta_{k,\beta})v_{k,\beta}/v_{\beta,\beta} \ge
(1-\delta_{k,\alpha})v_{k,\alpha}/v_{\alpha,\alpha}\right\}$ and
$|\Omega|=\lambda$. It follows that
\begin{eqnarray}\nonumber
\sum_{k\in \Omega}(f_{i,k}-f_{i,\beta})\left\{
(1-\delta_{k,\beta})\frac{v_{k,\beta}}{v_{\beta,\beta}}-(1-\delta_{k,\alpha})\frac{v_{k,\alpha}}{v_{\alpha,\alpha}}\right\}
&\le & (f_{i,\alpha}-f_{i,\beta}) \left\{\frac{\sum_{k\in
\Omega}v_{k,\beta}}{v_{\beta,\beta}}-\frac{\sum_{k\in
\Omega}(1-\delta_{k,\alpha})v_{k,\alpha}}{v_{\alpha,\alpha}}\right\} \\
\label{yyy} & \le & (f_{i,\alpha}-f_{i,\beta})f(\lambda),
\end{eqnarray}
where $f(\lambda)=\lambda M /\{ \lambda M+ (t-1-\lambda)m \}-
 (\lambda-1)m/\{ (\lambda-1)m+ (t-\lambda)M\}$. Note that
$f(\lambda)$ takes its maximum at $\lambda=t/2$ when $\lambda\in [1,
t-1]$ and  $f(t/2)=\{ tM-(t-2)m\}/\{ tM+(t-2)m\}$. By \eqref{wijin},
\eqref{falphabeta}, and \eqref{yyy},
\begin{equation*}
f_{i,\alpha}-f_{i,\beta}\le \frac{ tM-(t-2)m}{ tM+(t-2)m}\times
(f_{i,\alpha}-f_{i,\beta})+a.
\end{equation*}
Hence
\begin{equation*}
f_{i,\alpha}-f_{i,\beta} \le  \frac{
M\{tM+(t-2)m\}}{2(t-2)m^3(t-1)^2}.
\end{equation*}
Since $f_{i,\alpha}=\max_{k}f_{i,k}$ and $f_{i,\beta}=\min_{k}
f_{i,k}$, we have $\max_{1\le k\le t} |f_{i,k}| \le f_{i,\alpha} -
f_{i,\beta} + f_{i,\beta}I(f_{i,\beta}>0),$ where $I(\cdot)$ is the
indicator function. Hence,
\begin{eqnarray*}
 \max_{1
 \le k\le t} |f_{i,k}|  \le  \frac{ M(tM+(t-2)m)}{2(t-2)m^3(t-1)^2}+\frac{1}{2v_{\cdot\cdot}}\le \frac{ M(tM+(t-2)m)}{2(t-2)m^3(t-1)^2}+\frac{1}{2m(t-1)^2}
 \le  O\left\{\frac{e^{6L_t}}{(t-1)^2} \right\}.
\end{eqnarray*}
\end{proof}

\section*{Appendix 2}

 Let $d_{i,j}=1$
if there exists an edge between vertices $i$ and $j$ and $0$
otherwise. Note that $d_i=\sum_{j\neq i}d_{i,j}$ and $\sum_i d_i/2
=\sum_{1\le i<j \le t} d_{i,j}$ are sums of $t-1$ and $t(t-1)/2$
independent Bernoulli random variables, respectively. By the central
limit theorem for the bounded case in \citeauthor{Loeve:1977}
(\citeyear{Loeve:1977}, p. 289), we know that $v_{i,i}^{-1/2} \{d_i
- E(d_i)\}$ and $(2v_{\cdot\cdot})^{-1/2}[\sum_i \{d_i - E(d_i)\}]$
are asymptotically standard normal if $v_{i,i}$ diverges. By
\eqref{vij-ineq}, we have
\[\frac{(t-1)e^{2L_t}}{(1+e^{2L_t})^2}\le v_{i,i} \le \frac{t-1}{4},
~~~ i=1, \ldots, t;~~ v_{\cdot\cdot}\ge
\frac{t(t-1)e^{2L_t}}{(1+e^{2L_t})^2}.
\] If $e^{L_t}=o( t^{1/2} )$, then
\[
v_{\cdot\cdot}^{-1} \max_{i=1,\ldots, t} v_{i,i} \le
(1+e^{2L_t})^2/(4te^{2L_t})=o(1),
\]
and $v_{i,i}^{1/2}[S_t\{d-E(d)\}]_i =v_{i,i}^{-1/2} \{d_i -
E(d_i)\}+o_p(1)$. Thus, we have the following proposition.

\begin{proposition}\label{pro1}
If $e^{L_t}=o( t^{1/2} )$, then for any fixed $r\ge 1$, as
$t\to\infty$, the vector consisting of the first $r$ elements of
$S_t\{d-E(d)\}$ is asymptotically multivariate normal with mean zero
and covariance matrix $(G_t^{-1})_{r\times r}$, where
$G_t^{-1}=\mbox{diag}(v_{1,1}^{-1}, \ldots, v_{t,t}^{-1})$.
\end{proposition}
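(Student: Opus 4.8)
The plan is to take as given the reduction already carried out in the paragraph preceding the proposition, namely that under $e^{L_t}=o(t^{1/2})$ one has $v_{i,i}^{1/2}[S_t\{d-E(d)\}]_i = v_{i,i}^{-1/2}\{d_i - E(d_i)\}+o_p(1)$ for each fixed $i$. Writing $G_{t,r}^{1/2}=\mathrm{diag}(v_{1,1}^{1/2},\ldots,v_{r,r}^{1/2})$, the asserted limit law with covariance $(G_t^{-1})_{r\times r}$ is precisely the statement that $G_{t,r}^{1/2}$ times the first $r$ components of $S_t\{d-E(d)\}$ converges to $N(0,I_r)$. Since $r$ is fixed, the displayed approximation holds simultaneously in all $r$ coordinates with a joint $o_p(1)$ remainder, so by Slutsky's theorem it suffices to prove that the vector with components $v_{i,i}^{-1/2}\{d_i - E(d_i)\}$, $i=1,\ldots,r$, converges to the standard $r$-variate normal.

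To establish this multivariate central limit theorem I would use the Cram\'er--Wold device. Fix $c=(c_1,\ldots,c_r)^\T\in R^r$ and put $W_t=\sum_{i=1}^r c_i v_{i,i}^{-1/2}\{d_i-E(d_i)\}$. Substituting $d_i=\sum_{j\neq i}d_{i,j}$ and regrouping the double sum by edges rewrites $W_t$ as a single sum, over unordered pairs $\{a,b\}$, of the independent centred indicators $d_{a,b}-E(d_{a,b})$, each weighted by $\sum_{i\in\{a,b\},\,i\le r}c_i v_{i,i}^{-1/2}$. Because distinct edge indicators are mutually independent, this is a genuine sum of independent, uniformly bounded summands, which is exactly the setting of the bounded-case central limit theorem already invoked in the text.

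The key computation is the limiting variance. Using $\mathrm{cov}(d_i,d_j)=v_{i,j}$ for $i\neq j$ together with the defining identity $\sum_{j\neq i}v_{i,j}=v_{i,i}$, a direct expansion gives
\[
\mathrm{var}(W_t)=\sum_{i=1}^r c_i^2 + 2\sum_{1\le i<k\le r} c_i c_k\,\frac{v_{i,k}}{(v_{i,i}v_{k,k})^{1/2}}.
\]
The diagonal part collapses to $\|c\|^2$ precisely because of the degree identity, while each off-diagonal term is bounded in absolute value by $(1+e^{2L_t})^2/\{4(t-1)e^{2L_t}\}\to 0$ by the bounds on $v_{i,j}$ and $v_{i,i}$ stated just before the proposition; hence $\mathrm{var}(W_t)\to\|c\|^2$. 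This vanishing of the correlations is the crux of the argument: although the degrees $d_1,\ldots,d_r$ share common edges and are genuinely dependent, each pair shares only a single bounded edge while each variance diverges, so the first $r$ standardized degrees are asymptotically uncorrelated.

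Finally I would verify a Lyapunov condition for the edge representation of $W_t$. Each summand is bounded in sup norm by $2\max_{1\le i\le r}|c_i|\max_{1\le i\le t}v_{i,i}^{-1/2}$, which is of order $(t-1)^{-1/2}e^{L_t}$ and therefore tends to $0$ exactly under the hypothesis $e^{L_t}=o(t^{1/2})$. Since $\mathrm{var}(W_t)$ converges to the positive constant $\|c\|^2$, the sum of third absolute moments is bounded by the sup norm of the largest summand times $\mathrm{var}(W_t)$, so the Lyapunov ratio vanishes and $W_t$ converges in distribution to $N(0,\|c\|^2)$. As $c$ was arbitrary, the Cram\'er--Wold device yields the joint limit $N(0,I_r)$, and combining with the Slutsky reduction of the first paragraph completes the proof. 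I expect the only genuinely delicate step to be the variance computation together with the uniform control showing that the off-diagonal correlations vanish; the application of the bounded-case central limit theorem is then routine.
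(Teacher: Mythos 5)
Your argument is correct and follows essentially the same route as the paper: the identical reduction $v_{i,i}^{1/2}[S_t\{d-E(d)\}]_i=v_{i,i}^{-1/2}\{d_i-E(d_i)\}+o_p(1)$ under $e^{L_t}=o(t^{1/2})$, followed by the bounded-case central limit theorem applied to the edge decomposition $d_i=\sum_{j\neq i}d_{i,j}$. The only difference is that you make explicit, via the Cram\'er--Wold device and the computation showing that the off-diagonal correlations $v_{i,k}/(v_{i,i}v_{k,k})^{1/2}$ vanish, the joint-normality step that the paper asserts without detail; that is a useful elaboration rather than a departure.
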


\begin{lemma}\label{central1-lemma1}
Let $F_t=V_t^{-1}-S_t$ and $U_t=\mbox{cov}[F_t
\{d-E(d)\}]$. Then
\begin{equation}
||U_t||\le ||V_t^{-1}-S_t||+\frac{(1+e^{2L_t})^4}{4e^{4L_t}(t-1)^2}.
\end{equation}
\end{lemma}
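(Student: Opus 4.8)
The plan is to use that $V_t$ is the covariance matrix of the degree vector $d$ to write $U_t$ in closed form, and then to collapse that form onto matrices whose entries were already bounded in the proof of Proposition~\ref{pro0}. First I would record that $F_t$ is symmetric: $V_t$ is a covariance matrix, hence symmetric, so $V_t^{-1}$ is symmetric, and $S_t$ is visibly symmetric because $s_{i,j}=\delta_{i,j}/v_{i,i}-1/v_{\cdot\cdot}$; therefore $F_t^\T=F_t$. Since $\mbox{cov}\{d-E(d)\}=V_t$, the rule $\mbox{cov}(AX)=A\,\mbox{cov}(X)\,A^\T$ gives
\[ U_t=F_tV_tF_t^\T=F_tV_tF_t. \]

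Next I would simplify this product using two identities that are already implicit in the proof of Proposition~\ref{pro0}, namely $F_tV_t=(V_t^{-1}-S_t)V_t=I_t-S_tV_t$ and $V_tF_t=V_tV_t^{-1}-V_tS_t=I_t-V_tS_t=R_t$. Substituting the first on the left and then the second,
\[ U_t=(F_tV_t)F_t=(I_t-S_tV_t)F_t=F_t-S_t(V_tF_t)=F_t-S_tR_t=F_t-W_t, \]
since $W_t=S_tR_t$ by the definitions used in Appendix~1. This reduction is the crux of the argument: the covariance $U_t$ of the error term $F_t\{d-E(d)\}$ equals the approximation-error matrix $F_t=V_t^{-1}-S_t$ minus the residual matrix $W_t$, both of which were already estimated while proving Proposition~\ref{pro0}.

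The remaining bound is then routine. Applying the triangle inequality entrywise, $|(U_t)_{i,j}|=|f_{i,j}-w_{i,j}|\le|f_{i,j}|+|w_{i,j}|$, so taking maxima over $i,j$,
\[ ||U_t||\le ||F_t||+||W_t||=||V_t^{-1}-S_t||+||W_t||. \]
For the second term I would reuse the estimate \eqref{wijin} from Proposition~\ref{pro0}, $|w_{i,j}|\le M/\{m^2(t-1)^2\}$, and then insert $M\le 1/4$ together with $m\ge e^{2L_t}/(1+e^{2L_t})^2$ from \eqref{vij-ineq}, which yields $1/m^2\le(1+e^{2L_t})^4/e^{4L_t}$ and hence $||W_t||\le (1+e^{2L_t})^4/\{4e^{4L_t}(t-1)^2\}$, exactly the second term in the claimed inequality. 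The only genuine obstacle is the algebraic one of spotting the collapse $U_t=F_t-W_t$; once that identity is in hand, every quantity has already been controlled in Appendix~1 and no new estimate is needed.
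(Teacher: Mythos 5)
Your proof is correct and follows essentially the same route as the paper: both use $U_t=F_tV_tF_t^\T$ together with the identity $U_t=(V_t^{-1}-S_t)-S_t(I_t-V_tS_t)=F_t-W_t$, and then bound $\|W_t\|$ via the estimate \eqref{wijin} (equivalently, via $M/\{m^2(t-1)^2\}$ with $M\le 1/4$ and $m\ge e^{2L_t}/(1+e^{2L_t})^2$). The only cosmetic difference is that the paper writes out the entries of $S_t(I_t-V_tS_t)$ explicitly before invoking the same bound, so no substantive comparison is needed.
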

\begin{proof}
Note that
\begin{equation*}
U_t=F_tV_tF^T_t=(V_t^{-1}-S_t)-S_t (I_t-V_tS_t),
\end{equation*}
and
\begin{equation*}
\{S_t(I_t-V_tS_t)\}_{i,j}=\frac{(\delta_{i,j}-1)v_{i,j}}{v_{i,i}v_{j,j}}+\frac{1}{v_{\cdot\cdot}}.
\end{equation*}
By  \eqref{wijin},
\begin{equation*}
|\{S_t(I_t-V_tS_t)\}_{i,j}|\le
\max\{\frac{(1+e^{2L_t})^4}{4e^{4L_t}(t-1)^2},
\frac{(1+e^{2L_t})^2}{t(t-1)e^{2L_t}} \}\le
\frac{(1+e^{2L_t})^4}{4e^{4L_t}(t-1)^2},
\end{equation*}
Thus,
\begin{eqnarray*}
||U_t||  \le  ||V_t^{-1}-S_t||+||S_t(I_t-V_tS_t)||
\le
||V_t^{-1}-S_t||+\frac{(1+e^{2L_t})^4}{4e^{4L_t}(t-1)^2}.
\end{eqnarray*}
\end{proof}

\begin{lemma}\label{central-lemma2}
Assume that Theorem 1 (b) holds. If $L_t=o\{\log (\log t)\}$, then
for any $i$,
\begin{equation}
\hat{\beta}_i-\beta_i=[V_{t}^{-1}\{d-E(d)\}]_i+o_p(t^{-1/2}).
\end{equation}
\end{lemma}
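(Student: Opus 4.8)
The plan is to linearize the maximum likelihood equations by a second-order Taylor expansion and to show that the resulting quadratic remainder, once multiplied by $V_t^{-1}$, is negligible at rate $o_p(t^{-1/2})$. Write $F_i(\beta)=\sum_{j\neq i}e^{\beta_i+\beta_j}/(1+e^{\beta_i+\beta_j})$ for the mean-degree function, so that $E(d_i)=F_i(\beta)$ while the moment equations {\rm (1)} give $d_i=F_i(\hat\beta)$. Hence $d-E(d)=F(\hat\beta)-F(\beta)$, and since the Jacobian of $F$ at $\beta$ is exactly the Fisher information matrix $V_t$ (its $(i,k)$ entry is $\partial F_i/\partial\beta_k=v_{i,k}$), a componentwise Taylor expansion with Lagrange remainder yields
\begin{equation*}
d-E(d)=V_t(\hat\beta-\beta)+R,\qquad R_i=\tfrac12(\hat\beta-\beta)^\T\nabla^2F_i(\tilde\beta^{(i)})(\hat\beta-\beta),
\end{equation*}
for some $\tilde\beta^{(i)}$ on the segment joining $\beta$ and $\hat\beta$. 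Inverting gives $\hat\beta-\beta=V_t^{-1}\{d-E(d)\}-V_t^{-1}R$, so the lemma reduces to proving $[V_t^{-1}R]_i=o_p(t^{-1/2})$.

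Next I would bound the remainder. Writing $g(x)=e^x/(1+e^x)$, the only nonzero entries of $\nabla^2F_i$ are values $g''(\beta_i+\beta_j)$ sitting in row and column $i$ and on the diagonal, and $g''$ is bounded by a universal constant uniformly in its argument; crucially, this needs no control of the intermediate point $\tilde\beta^{(i)}$. Combining this with the uniform consistency rate $\max_k|\hat\beta_k-\beta_k|\le O_p(\rho_t)$, where $\rho_t=(\log t)^{1/2}t^{-1/2}e^{c_1e^{c_2L_t}+c_3L_t}$, from Theorem~\ref{dense-con}(b), the quadratic form has at most $O(t)$ active terms, each $O_p(\rho_t^2)$, so $\max_i|R_i|=O_p(t\rho_t^2)$.

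Then I would transfer this bound through $V_t^{-1}$. Using Proposition~\ref{pro0} to replace $V_t^{-1}$ by $S_t$ up to an error $\|V_t^{-1}-S_t\|=O\{(t-1)^{-2}e^{6L_t}\}$, together with the bounds $v_{i,i}\ge(t-1)e^{-2L_t}/4$ and $v_{\cdot\cdot}\ge t(t-1)e^{-2L_t}/4$, the diagonal contribution $R_i/v_{i,i}$, the off-diagonal contribution $v_{\cdot\cdot}^{-1}\sum_kR_k$, and the approximation-error contribution $[(V_t^{-1}-S_t)R]_i$ are all of order $O_p(\rho_t^2e^{6L_t})$. Thus $[V_t^{-1}R]_i=O_p(\rho_t^2e^{6L_t})$.

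The main obstacle is verifying that this last order is genuinely $o_p(t^{-1/2})$, that is, that $\rho_t^2e^{6L_t}t^{1/2}=(\log t)\,t^{-1/2}e^{2c_1e^{c_2L_t}+(2c_3+6)L_t}\to0$. This is exactly where the hypothesis $L_t=o\{\log(\log t)\}$ enters: it forces $c_2L_t-\log\log t\to-\infty$, hence $e^{c_2L_t}=o(\log t)$ and the doubly-exponential factor $e^{2c_1e^{c_2L_t}}=t^{o(1)}$, while $e^{(2c_3+6)L_t}=(\log t)^{o(1)}$. The product is therefore $t^{o(1)}$, and $(\log t)\,t^{-1/2}t^{o(1)}\to0$, giving the claim. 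Controlling this doubly-exponential dependence on $L_t$ and confirming that it remains subpolynomial in $t$ precisely under $L_t=o\{\log(\log t)\}$ is the delicate point of the argument.
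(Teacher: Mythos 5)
Your proposal is correct and follows essentially the same route as the paper: a second-order Taylor expansion of the moment equations giving $d-E(d)=V_t(\hat\beta-\beta)+R$, a uniform bound $\max_i|R_i|=O_p(t\rho_t^2)$ from Theorem 1(b), and the split $V_t^{-1}=S_t+(V_t^{-1}-S_t)$ with Proposition 1 controlling the second piece, followed by the same rate check that $L_t=o\{\log(\log t)\}$ makes the doubly-exponential factor $t^{o(1)}$. The only cosmetic difference is that you package the remainder as a Hessian quadratic form while the paper writes it entrywise as $h_{i,j}$.
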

\begin{proof}
By Theorem 1 (b), we know that
\[
\lambda_t=\max_{1\le i\le t} |\hat{\beta}_i-\beta_i|=O_p\{(\log
t)^{1/2}t^{-1/2}e^{c_1e^{c_2L_t}+c_3L_t} \}.
\]
Let $\hat{\gamma}_{i,j}=\hat{\beta}_i + \hat{\beta}_j -\beta_i
-\beta_j$. By Taylor expansion, for any $i\not=j$,
\[
\frac{ e^{\hat{\beta}_i +\hat{\beta}_j } }{ 1+
e^{\hat{\beta}_i+\hat{\beta}_j} }
-\frac{e^{\beta_i+\beta_j}}{1+e^{\beta_i+\beta_j}} =
 \frac{e^{\beta_i+\beta_j}}{(1+e^{\beta_i+\beta_j})^2}\hat{\gamma}_{ij}
 +h_{i,j},
\]
where
$$h_{i,j}=\frac{e^{\beta_i+\beta_j+\theta_{i,j}\hat{\gamma}_{i,j}}(1-e^{\beta_i+\beta_j+\theta_{i,j}\hat{\gamma}_{i,j}})  }{2(1+e^{\beta_i+\beta_j+\theta_{i,j}\hat{\gamma}_{ij}})^3}\hat{\gamma}_{i,j}^2,$$
and $0\le\theta_{i,j}\le 1$. Rewrite \eqref{moment} as
\begin{equation*}
d-E(d)=V_t(\hat{\beta}-\beta)+h,
\end{equation*}
where ${h}=(h_1, \ldots, h_t)^T$ and $h_i=\sum_{j\neq i}h_{i,j}$.
Equivalently,
\begin{equation}\label{represent}
\hat{\beta}-\beta=V_t^{-1}\{d-E(d)\}+V_t^{-1}h.
\end{equation}
Since $|e^{x}(1-e^{x})/(1+e^x)^3| \le 1$, we have
\[
|h_{i,j}|\le |\hat{\gamma}_{i,j}^2|/2\le 2\lambda_t^2,~~~~
|h_i|\le \sum_{j\neq i}|h_{i,j}|\le 2(t-1)\lambda_t^2.\]
 Note that
$(S_t {h})_i  =  h_i/v_{i,i} -v_{\cdot\cdot}^{-1}\sum_{j=1}^t
h_{j},\mbox{~~and~~} (V_t^{-1} {h})_i=(S_t {h})_i+(F_t {h})_i$. By
direct calculation, we have
\begin{equation*}
|(S_t {h})_i|  \le \frac{8\lambda_t^2(1+e^{2L_t})^2}{e^{2L_t}} = O\{
(\log t)t^{-1} e^{2c_1e^{c_2L_t}+(2c_3+2)L_t}  \},
\end{equation*}
and, by Proposition 1,
\begin{equation*}
|(F_t h)_i| \le  ||F_t||\times (t\max_i|h_i|) \le  O(e^{6L_t} \times
\lambda_t^2 )\le O\{(\log t)t^{-1} e^{2c_1e^{c_2L_t}+(2c_3+6)L_t}
\}.
\end{equation*}
If $L_t=o\{\log (\log t) \}$, then $|(V_t^{-1}h)_i|\le
|(S_th)_i|+|(F_th)_i|=o(t^{-1/2})$. This completes the proof.
\end{proof}

\begin{proof}[of Theorem~\ref{central-dense}]
By \eqref{represent},
\begin{equation*}
(\hat{\beta}-\beta)_i= [S_t\{d-E(d)\}]_i+ [F_t\{d-E(d)\}]_i + (V_t^{-1}h)_i.
\end{equation*}
By Lemmas 1 and 2, if $L_t=o\{\log (\log t)\}$, then
\[(\hat{\beta}-\beta)_i=
[S_t\{d-E(d)\}]_i+o(t^{-1/2}).\] Theorem \ref{central-dense} follows
directly from Proposition \ref{pro1}.
\end{proof}


\begin{thebibliography}{}

\bibitem[\protect\astroncite{Bradley \&\ Terry}{1952}]{Bradley:Terry:1952}
{\sc Bradley, R. A. \&\ Terry, M. E.} (1952).
\newblock Rank analysis of incomplete block designs I. The method of paired
comparisons.
\newblock {\em Biometrika} {\bf 39}, 324--345.


\bibitem[\protect\astroncite{Blitzstein \&\ Diaconis}{2011}]{Blitzstein:Diaconis:2011}
{\sc Blitzstein, J. \&\ Diaconis, P.} (2011).
\newblock
A sequential importance sampling algorithm for generating random graphs with prescribed degrees.
\newblock{\em Internet Mathematics}  {\bf 6}, 489--522.


\bibitem[\protect\astroncite{Chatterjee et~al.}{2011}]{Chatterjee:Diaconis:Sly:2011}
{\sc Chatterjee, S., Diaconis, P. \&\ Sly, A.} (2011).
\newblock Random graphs with a given degree sequence.
\newblock{\em Ann. Appl. Probab.} {\bf 21}, 1400--1435.

\bibitem[\protect\astroncite{Holland \&\ Leinhardt}{1981}]{Holland:Leinhardt:1981}
{\sc
Holland, P. ~W. \&\ Leinhardt, S.} (1981).
\newblock
An exponential family of probability distributions for random graphs
(with discussion),
\newblock{\em J. Amer. Stat. Assoc.} {\bf 76}, 33--50.


\bibitem[\protect\astroncite{Jackson}{2008}]{Jackson:2008}
{\sc Jackson, M.~ O.} (2008).
\newblock{\em Social and Economic Networks}.
\newblock
Princeton, New Jersey: Princeton University Press.

\bibitem[\protect\astroncite{Lo\`{e}ve}{1977}]{Loeve:1977}
{\sc Lo\`{e}ve, M.} (1977).
\newblock{\em Probability Theory}. 4th Ed.
\newblock
New York: Springer-Verlag.



\bibitem[\protect\astroncite{Newman et~al.}{2001}]{Newman:Strogatz:Watts:2001}
{\sc Newman, M. E. J., Strogatz, S. H. \&\ Watts D. J.} (2001).
\newblock Random graphs with arbitrary degree distributions
and their applications,
\newblock{\em Phys. Rev. E.} {\bf 64}, 026118.


\bibitem[\protect\astroncite{Simons \&\ Yao}{1999}]{Simons:Yao:1999}
{\sc Simons, G. \&\ Yao, Y.} (1999).
\newblock
Asymptotics when the number of parameters tends to infinity in the
Bradley--Terry model for paired comparisons.
\newblock{\em Ann. Stat.}  {\bf 27}, 1041--1060.
\end{thebibliography}
\end{document}